\documentclass{amsart}
\usepackage{setspace, amsmath, amsthm, amssymb, amsfonts, amscd, epic, graphicx, ulem, dsfont}
\usepackage[T1]{fontenc}
\usepackage{multirow}
\usepackage{bbm}

\newtheorem{lem}{Lemma}
\newtheorem{thm}{Theorem}
\newtheorem*{un-thm}{Theorem}

\newtheorem{pro}{Proposition}
\newtheorem{cor}{Corollary}

\theoremstyle{abstract}

\theoremstyle{remark}
\newtheorem*{rema}{Remark}

\theoremstyle{definition}

\newcommand{\R}{\mathbb{R}}

\makeatletter \@namedef{subjclassname@2010}{
  \textup{2010} Mathematics Subject Classification}
\makeatother

\begin{document}

\title[The Sum of Two Unbounded Linear Operators]{The Sum of Two Unbounded Linear Operators: Closedness, Self-adjointness and Normality}
\author{Mohammed Hichem Mortad}

\address{Department of
Mathematics, University of Oran, B.P. 1524, El Menouar, Oran 31000.
Algeria.\newline {\bf Mailing address}:
\newline Dr Mohammed Hichem Mortad \newline BP 7085
Es-Seddikia\newline Oran
\newline 31013 \newline Algeria}

\email{mhmortad@gmail.com, mortad@univ-oran.dz.}

\begin{abstract}
In the present paper we give results on the closedness and the
self-adjointness of the sum of two unbounded operators. We present a
new approach to these fundamental questions in operator theory. We
also prove a new version of the Fuglede theorem where the operators
involved are all unbounded. This new Fuglede theorem allows us to
prove (under extra conditions) that the sum of two unbounded normal
operators remains normal. Also a result on the normality of the
unbounded product of two normal operators is obtained as a
consequence of this new "Fuglede theorem". Some interesting examples
are also given.
\end{abstract}

\subjclass[2010]{Primary 47A05; Secondary 47B25}

\keywords{Unbounded operators. Selfadjoint, closed and normal
operators. Invertible unbounded operators. Operator sums. Operator
products. Fuglede theorem.}

\thanks{Partially supported by "Laboratoire d'Analyse Mathématique et Applications".}

\maketitle

\section{Introduction}

We start with some standard notions and results about linear
operators on a Hilbert space. We assume the reader is familiar with
other results and definitions about linear operators. Some general
references are \cite{GGK,Gold-Book-1966,Kato-Book,MV,Putnam-book,
RUD,WEI}.

All operators are assumed to be densely defined together with any
operation involving them or their adjoints. Bounded operators are
assumed to be defined on the whole Hilbert space.

If $A$ and $B$ are two unbounded operators with domains $D(A)$ and
$D(B)$ respectively, then $B$ is called an extension of $A$, and we
write $A\subset B$, if $D(A)\subset D(B)$ and if $A$ and $B$
coincide on $D(A)$.   If $A\subset B$, then $B^*\subset A^*$.

The product $AB$ of two unbounded operators $A$ and $B$ is defined
by
\[BA(x)=B(Ax) \text{ for } x\in D(BA)\]
where
\[D(BA)=\{x\in D(A):~Ax\in D(B)\}.\]

Since the expression $AB=BA$ will be often met, and in order to
avoid possible confusions, we recall that by writing $AB=BA$, we
mean that $ABx=BAx$ for all $x\in D(AB)=D(BA)$.

Recall that the unbounded operator $A$, defined on a Hilbert space
$H$, is said to be invertible if there exists an \textit{everywhere
defined} (i.e. on the whole of $H$) bounded operator $B$ such that
\[BA\subset AB=I\]
where $I$ is the usual identity operator. This is the definition
adopted in the present paper. It may be found in e.g. \cite{Con} or
\cite{GGK}. We insist on the inverse being defined everywhere since
if it were not, that it is known from the literature that some of
the results to be proved (for instance Corollary \ref{AB normal A B
Invertible}) may fail to hold. Of course, in some textbooks, they do
not assume the inverse defined everywhere as in e.g.
\cite{Kato-Book}.

An unbounded operator $A$ is said to be closed if its graph is
closed; self-adjoint if $A=A^*$ (hence from known facts self-adjoint
operators are automatically closed); normal if it is \textit{closed}
and $AA^*=A^*A$ (this implies that $D(AA^*)=D(A^*A)$).

The following lemma is standard (for a proof see eg \cite{GGK})

\begin{lem}\label{invetrible closed}
Let $A$ be a densely defined operator. If $A$ is invertible, then it
is closed.
\end{lem}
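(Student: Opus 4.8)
The plan is to extract from the definition of invertibility the everywhere-defined bounded inverse $B$ and then verify the closed-graph condition for $A$ directly. Writing out the hypothesis $BA\subset AB=I$, two facts are recorded. First, since $AB=I$ is an identity on all of $H$, for every $x\in H$ we have $Bx\in D(A)$ and $A(Bx)=x$; in particular $\ran B\subseteq D(A)$. Second, since $BA\subset I$, for every $x\in D(A)$ we have $B(Ax)=x$.

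Now let $(x_n)$ be a sequence in $D(A)$ with $x_n\to x$ and $Ax_n\to y$ in $H$. Applying the continuous (everywhere defined, bounded) operator $B$ and using $B(Ax_n)=x_n$, we obtain $x_n=B(Ax_n)\to By$, so $x=By$ by uniqueness of limits. Since $By\in\ran B\subseteq D(A)$, this already gives $x\in D(A)$, and then $Ax=A(By)=(AB)y=y$. Hence $(x,y)$ lies in the graph of $A$, so that graph is closed and $A$ is closed.

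A more structural way to phrase the same thing: $AB=I$ forces $B$ to be injective (if $Bx=0$ then $x=ABx=0$), so its algebraic inverse $B^{-1}$, with domain $\ran B$, is a well-defined operator; the inclusion $BA\subset I$ together with $\ran B\subseteq D(A)$ then yields $A=B^{-1}$. As $B$ is bounded and everywhere defined it is closed, and the inverse of a closed injective operator is closed, whence $A$ is closed.

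I do not expect a genuine obstacle here. The only delicate point is to read the asymmetric hypothesis $BA\subset AB=I$ correctly and, in particular, to notice that $\ran B\subseteq D(A)$: it is precisely this inclusion that upgrades ``$x$ is a limit of points of $D(A)$'' to ``$x\in D(A)$''. Once that is in hand, the remainder is just the standard continuity argument for $B$.
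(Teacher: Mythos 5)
Your proof is correct and is essentially the standard argument that the paper simply cites from \cite{GGK} without reproducing: use $BA\subset I$ and continuity of the everywhere-defined bounded inverse to identify the limit, and use $AB=I$ (hence $\ran B\subseteq D(A)$) to see that the limit lies in $D(A)$. Both your direct graph argument and the reformulation $A=B^{-1}$ are valid.
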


When dealing with self-adjoint and normal operators, taking adjoints
is compulsory, so we list some straightforward results about the
adjoint of the sum and the product of unbounded operators.

\begin{thm}\label{adjoints and closedness A+B AB basic}
Let $A$ be an unbounded operator.
\begin{enumerate}
  \item $(A+B)^*=A^*+B^*$ if $B$ is bounded, and $(A+B)^*\supset A^*+B^*$ if $B$ is unbounded.
  \item $A+B$ is closed if $A$ is assumed to be closed and if $B$ is bounded.
  \item $(BA)^*=A^*B^*$ if $B$ is bounded.
  \item $A^*B^*\subset (BA)^*$ for any unbounded $B$ and if $BA$ is
  densely defined.
\end{enumerate}
\end{thm}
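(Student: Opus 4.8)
The plan is to verify each of the four assertions directly from the definition of the adjoint, that is, by computing inner products $\langle Tx,y\rangle$ and reading off exactly when $y$ lies in the relevant domain. Throughout I would use repeatedly that a bounded operator $B$ (assumed everywhere defined) has $B^*$ everywhere defined and bounded, and that consequently $D(A+B)=D(A)$ and $D(BA)=D(A)$.

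For (1), first I would establish the inclusion $(A+B)^*\supset A^*+B^*$, which holds whether or not $B$ is bounded: for $y\in D(A^*)\cap D(B^*)$ and any $x\in D(A)\cap D(B)$ one has $\langle (A+B)x,y\rangle=\langle x,A^*y\rangle+\langle x,B^*y\rangle=\langle x,(A^*+B^*)y\rangle$, which forces $y\in D((A+B)^*)$ with $(A+B)^*y=(A^*+B^*)y$. When $B$ is bounded the reverse inclusion comes from running the same computation backwards: for $y\in D((A+B)^*)$ and $x\in D(A)$, write $\langle Ax,y\rangle=\langle (A+B)x,y\rangle-\langle Bx,y\rangle=\langle x,((A+B)^*-B^*)y\rangle$, whence $y\in D(A^*)$ and $A^*y=((A+B)^*-B^*)y$, so $y\in D(A^*+B^*)$.

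For (2), I would take a sequence $x_n\in D(A+B)=D(A)$ with $x_n\to x$ and $(A+B)x_n\to z$; boundedness of $B$ gives $Bx_n\to Bx$, hence $Ax_n=(A+B)x_n-Bx_n\to z-Bx$, and closedness of $A$ yields $x\in D(A)$ with $Ax=z-Bx$, i.e. $x\in D(A+B)$ and $(A+B)x=z$. For (3) and (4), the argument is the same bookkeeping: $y\in D(A^*B^*)$ means $B^*y\in D(A^*)$ (together with $y\in D(B^*)$ in the unbounded case), and then for $x\in D(BA)$ one has $Ax\in D(B)$, so $\langle BAx,y\rangle=\langle Ax,B^*y\rangle=\langle x,A^*B^*y\rangle$, giving $A^*B^*\subset (BA)^*$; in (4) this uses that $BA$ is densely defined, which is part of the hypothesis, while in (3) density is automatic since $D(BA)=D(A)$. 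For the reverse inclusion $(BA)^*\subset A^*B^*$ in (3), boundedness of $B$ is exactly what makes it work: if $y\in D((BA)^*)$ then $\langle Ax,B^*y\rangle=\langle BAx,y\rangle=\langle x,(BA)^*y\rangle$ for all $x\in D(A)$, so $B^*y\in D(A^*)$, i.e. $y\in D(A^*B^*)$.

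None of these steps is a genuine obstacle; the only point requiring care is tracking the domains and invoking boundedness of $B$ (equivalently, that $B^*$ is everywhere defined and bounded) at precisely the places where the reverse inclusions in (1) and (3) would otherwise fail.
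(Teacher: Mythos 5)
Your proof is correct and complete: each of the four assertions is verified by the standard inner-product bookkeeping from the definition of the adjoint, with boundedness of $B$ (hence $D(B)=D(B^*)=H$) invoked exactly where the reverse inclusions in (1) and (3) require it. The paper states this theorem without proof as a collection of well-known facts, and your argument is precisely the canonical one, so there is nothing to reconcile.
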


The following is also well-known
\begin{lem}\label{AB closed}The product $AB$ (in this order) of
two closed operators is closed if one of the following occurs:
\begin{enumerate}
  \item $A$ is invertible,
  \item $B$ is bounded.
\end{enumerate}
\end{lem}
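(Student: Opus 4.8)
The plan is, in both cases, to check closedness of $AB$ directly from the definition of its graph: I would take a sequence $(x_n)\subset D(AB)$ with $x_n\to x$ in $H$ and $ABx_n\to y$ in $H$, and show that $x\in D(AB)$ and $ABx=y$. Throughout I use that $D(AB)=\{x\in D(B):Bx\in D(A)\}$ (recalling that in the paper's convention $AB$ means: apply $B$, then $A$).

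For part (2) this is immediate: since $B$ is bounded and everywhere defined, $Bx_n\to Bx$; since each $Bx_n\in D(A)$ and $A(Bx_n)=ABx_n\to y$, the closedness of $A$ yields $Bx\in D(A)$ and $A(Bx)=y$, i.e. $x\in D(AB)$ with $ABx=y$. For part (1), I would bring in the bounded everywhere-defined inverse $A^{-1}$, for which $A^{-1}A\subset AA^{-1}=I$; note this gives $A^{-1}Az=z$ for all $z\in D(A)$ and $\ran A^{-1}\subset D(A)$. Since $Bx_n\in D(A)$, we have $Bx_n=A^{-1}(A(Bx_n))=A^{-1}(ABx_n)$, so $Bx_n\to A^{-1}y$ by boundedness of $A^{-1}$. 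Now $x_n\to x$, $Bx_n\to A^{-1}y$, and $B$ is closed, hence $x\in D(B)$ with $Bx=A^{-1}y\in\ran A^{-1}\subset D(A)$; therefore $x\in D(AB)$, and $ABx=A(A^{-1}y)=y$ because $AA^{-1}=I$. (By Lemma \ref{invetrible closed}, $A$ is automatically closed, although closedness of $A$ is not explicitly needed in the computation above.)

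I do not expect a genuine obstacle: the argument is routine sequence-chasing. The only point demanding a little care is the domain bookkeeping in case (1) — that $A^{-1}$ inverts $A$ on all of $D(A)$ and that $\ran A^{-1}\subset D(A)$ — which is exactly where one invokes the convention, stressed in the introduction, that the inverse is defined on the whole of $H$; without it the crucial identity $Bx_n=A^{-1}(ABx_n)$ (and hence the conclusion) may fail.
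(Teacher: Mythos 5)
Your proof is correct and complete. The paper states this lemma without proof (citing it as well-known), so there is nothing to compare against; your direct sequence-chasing argument is the standard one, and you handle the two points that actually matter correctly: in case (2) that $B$ is everywhere defined and continuous so $Bx_n\to Bx$, and in case (1) that the everywhere-defined bounded inverse gives both $Bx_n=A^{-1}(ABx_n)$ and $\ran A^{-1}\subset D(A)$, which is exactly where the paper's convention on invertibility is used.
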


The following lemma is essential

\begin{lem}[\cite{WEI}]\label{(AB)*=B*A*} If $A$ and $B$ are densely defined and $A$ is invertible with
inverse $A^{-1}$ in $B(H)$, then $(BA)^* =A^* B^*$.
\end{lem}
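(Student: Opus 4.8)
The plan is to start from the only inclusion available in general, namely $A^*B^*\subset (BA)^*$, which is Theorem \ref{adjoints and closedness A+B AB basic}(4) (legitimate since $BA$ is densely defined by the standing convention on all operations), and then to use the invertibility of $A$ to upgrade this to an equality. Write $C=A^{-1}\in B(H)$, so that $AC=I$ and $CA\subset I$.

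The first step is to observe that $A^*$ is itself invertible, with everywhere defined bounded inverse $C^*=(A^{-1})^*$. Taking adjoints in $CA\subset I$ and using Theorem \ref{adjoints and closedness A+B AB basic}(3) (here $C$ is the bounded ``outer'' factor) gives $I=I^*\subset (CA)^*=A^*C^*$, hence $A^*C^*=I$ on all of $H$; in particular $\ran C^*\subset D(A^*)$. Taking adjoints in $AC=I$ and using Theorem \ref{adjoints and closedness A+B AB basic}(4) gives $C^*A^*\subset (AC)^*=I$; in particular, for $u\in D(A^*)$ we get $u=C^*A^*u\in\ran C^*$, so that $\ran C^*=D(A^*)$. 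Thus $C^*A^*\subset A^*C^*=I$, i.e. $A^*$ is invertible in the sense of the paper with inverse $C^*$.

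The crucial trick is the elementary factorisation $B=(BA)A^{-1}$. A domain check gives $D\big((BA)A^{-1}\big)=\{x:A^{-1}x\in D(BA)\}=D(B)$ (using $AA^{-1}=I$), and on $D(B)$ one has $(BA)(A^{-1}x)=B(AA^{-1}x)=Bx$. Now apply Theorem \ref{adjoints and closedness A+B AB basic}(4) to this product, with ``outer'' factor $BA$ and ``inner'' factor $A^{-1}$, the resulting product $(BA)A^{-1}=B$ being densely defined:
\[(A^{-1})^*(BA)^*\subset\big((BA)A^{-1}\big)^*=B^*.\]
Finally I would chase domains. Let $y\in D((BA)^*)$. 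Since $C^*=(A^{-1})^*$ is everywhere defined and bounded, $y\in D\big(C^*(BA)^*\big)$, so the displayed inclusion forces $y\in D(B^*)$ with $B^*y=C^*(BA)^*y$. But $C^*(BA)^*y\in\ran C^*=D(A^*)$, hence $y\in D(A^*B^*)$, and using $A^*C^*=I$,
\[A^*B^*y=A^*C^*(BA)^*y=(BA)^*y.\]
Therefore $(BA)^*\subset A^*B^*$, and combined with $A^*B^*\subset (BA)^*$ this yields $(BA)^*=A^*B^*$.

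I expect the main obstacle to be the bookkeeping of domains in the two delicate places above: first, making precise that $A^*$ has an \emph{everywhere defined} bounded inverse, so that $\ran(A^{-1})^*=D(A^*)$ and $A^*(A^{-1})^*=I$ on all of $H$; and second, the final domain chase, where one must check that applying $A^*$ on the left of $(A^{-1})^*(BA)^*y$ is legitimate. The factorisation $B=(BA)A^{-1}$, which converts the ``wrong-way'' inclusion $A^*B^*\subset(BA)^*$ into exactly the reverse inclusion, is the key idea, and the only genuine use of the hypothesis that $A^{-1}$ is bounded and defined on all of $H$.
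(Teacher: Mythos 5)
Your proof is correct and takes essentially the same route as the paper's: both hinge on the factorisation $B=(BA)A^{-1}$, the inclusion $(A^{-1})^*(BA)^*\subset B^*$ obtained from the adjoint-of-product rule, and left-multiplication by $A^*$ together with $A^*(A^*)^{-1}=I$ to get $(BA)^*\subset A^*B^*$. Your version merely makes explicit the domain bookkeeping (in particular $\ran (A^{-1})^*=D(A^*)$) that the paper leaves implicit.
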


We include a proof (not outlined in \cite{WEI}) to show the
importance of assuming the inverse defined everywhere:

\begin{proof}
Since $A$ is invertible, $AA^{-1}=I$ (in \cite{Kato-Book} we would
have $AA^{-1}\subset I$ only) and hence
\[BAA^{-1}=B\Longrightarrow (A^{-1})^*(BA)^*\subset[(BA)A^{-1}]^*\subset B^*.\]
But $(A^{-1})^*=(A^*)^{-1}$ and so
\[A^*(A^*)^{-1}(BA)^*=(BA)^*\subset A^*B^*.\]
Since we always have $A^*B^*\subset (BA)^*$ the result follows.
\end{proof}

It is worth noticing that similar papers on sums and products exist.
The interested reader may look at \cite{vandaele}, \cite{Gesztesy},
\cite{Gust-CMB-2011}, \cite{HK}, \cite{Jorgensen-PALLE}, \cite{KOS},
\cite{Mess-Mortad-Djell-Azz}, \cite{Mortad-PAMS2003},
\cite{Mortad-IEOT-2009}, \cite{Mortad-CMB-2011},
\cite{mortad-CAOT-sum-normal}, \cite{Mortad-Demm-math},
\cite{Mortad-product-unbounded-kaplansky}, \cite{Mortad-Madani},
\cite{Seb-Stochel} and \cite{Vasilescu-1983-anti-commuting}, and
further bibliography cited therein.

Let us briefly say a few words on how the paper is organized. In the
main results section, we start by proving the first result on the
closedness of the sum. Then a self-adjointness result is
established. Also, it is proved that the adjoint of the sum is the
sum of the adjoints. To treat the case of the normality of the sum
of two normal operators, we prove a new version of the Fuglede
theorem where the operators involved are unbounded. This last result
can too be used to prove a result on the normality of the product of
two unbounded normal operators.

\section{Main Results}

We start by the closedness of the sum. We have

\begin{thm}\label{closed A+B}
Let $A$ and $B$ be two unbounded operators such that $AB=BA$. If $A$
(for instance) is invertible, $B$ is  closed and $D(BA^{-1})\subset
D(A)$, then $A+B$ is closed on $D(B)$.
\end{thm}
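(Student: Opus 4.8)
\emph{Proof proposal.} The plan is to by-pass factorisations and work directly with the graph of $A+B$, after first pinning down its domain. Since $A$ is invertible it is closed by Lemma~\ref{invetrible closed}, so $A$ and $B$ are both closed; moreover $A^{-1}\in B(H)$ satisfies $AA^{-1}=I$ on all of $H$, $A^{-1}Ax=x$ for every $x\in D(A)$, and $\ran(A^{-1})=D(A)$. First I would record a ``pull-back'' consequence of the commutation hypothesis: given $x\in D(B)$, set $z=A^{-1}x$; then $z\in D(A)$ and $Az=x\in D(B)$, so $z\in D(BA)=D(AB)$, i.e.\ $z\in D(B)$ with $Bz\in D(A)$, and $ABz=BAz$ now reads $A(Bz)=B(Az)=Bx$, whence $BA^{-1}x=A^{-1}Bx$ after applying $A^{-1}$ (legitimate since $Bz\in D(A)$). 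In particular $D(B)\subseteq D(BA^{-1})$, so the hypothesis $D(BA^{-1})\subseteq D(A)$ yields $D(B)\subseteq D(A)$ and hence $D(A+B)=D(A)\cap D(B)=D(B)$; this is the content of ``closed on $D(B)$''. (One could instead try $A+B=(I+BA^{-1})A$, with $I+BA^{-1}$ closed via Lemma~\ref{AB closed}(2), but Lemma~\ref{AB closed} does not cover the closedness of a product $(\text{closed})\cdot(\text{invertible})$ in \emph{this} order, which is why I favour the direct argument.)

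For closedness itself I would take $x_n\in D(B)$ with $x_n\to x$ and $(A+B)x_n=Ax_n+Bx_n\to z$ and show $x\in D(B)$, $Ax+Bx=z$. Applying the bounded operator $A^{-1}$, using $A^{-1}Ax_n=x_n$ (valid as $D(B)\subseteq D(A)$) and $A^{-1}Bx_n=BA^{-1}x_n$ from the previous step, gives $x_n+B(A^{-1}x_n)\to A^{-1}z$, hence $B(A^{-1}x_n)\to A^{-1}z-x$; since also $A^{-1}x_n\to A^{-1}x$ and $B$ is closed, this forces $A^{-1}x\in D(B)$ and $B(A^{-1}x)=A^{-1}z-x$. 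In particular $x\in D(BA^{-1})\subseteq D(A)$.

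The step I expect to be the crux is upgrading ``$x\in D(A)$'' to ``$x\in D(B)$''. The device: put $v=A^{-1}x$, so $v\in D(A)\cap D(B)$, $Av=x$ and $Bv=A^{-1}z-x$; since $A^{-1}z\in\ran(A^{-1})=D(A)$ and $x\in D(A)$, we get $Bv\in D(A)$, i.e.\ $v\in D(AB)=D(BA)$, which says precisely that $Av=x\in D(B)$. The value then follows from the same commutation: $Bx=B(Av)=BAv=ABv=A(Bv)=A(A^{-1}z-x)=z-Ax$, so $(A+B)x=z$ and $A+B$ is closed on $D(B)$. The argument uses only the precise notion of invertibility (in particular $AA^{-1}=I$, not merely $AA^{-1}\subset I$), the closedness of $A$ and $B$, the boundedness of $A^{-1}$, and careful bookkeeping of the two domain hypotheses — in fact neither Lemma~\ref{AB closed} nor Theorem~\ref{adjoints and closedness A+B AB basic} is needed.
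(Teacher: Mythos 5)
Your proof is correct, but it takes a genuinely different route from the paper's. You identify the domain exactly as the paper does (your ``pull-back'' step is precisely a verification of $A^{-1}B\subset BA^{-1}$, which together with $D(BA^{-1})\subset D(A)$ gives $D(A+B)=D(B)$), but for closedness the paper does not argue on sequences: it factors $A+B=A+BAA^{-1}=A+ABA^{-1}=A(I+BA^{-1})$, notes that $BA^{-1}$ is closed ($B$ closed, $A^{-1}$ bounded), hence $I+BA^{-1}$ is closed by Theorem~\ref{adjoints and closedness A+B AB basic}(2), and then invokes Lemma~\ref{AB closed}(1) for the product with the \emph{invertible factor on the left}. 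Your parenthetical only considered the factorization $(I+BA^{-1})A$, which indeed is not covered by Lemma~\ref{AB closed}; you overlooked that $A(I+BA^{-1})$ is the one that works, so the factorization route is not actually blocked. Your direct graph argument is essentially the same mechanism unfolded by hand: applying the bounded $A^{-1}$ to $(A+B)x_n$ turns the problem into convergence for $I+BA^{-1}$ along $A^{-1}x_n$, and closedness of $B$ does the rest. What your version buys is self-containment (no appeal to the product-closedness lemma) and full transparency about where each hypothesis enters --- $D(BA^{-1})\subset D(A)$ to get $x\in D(A)$, and the domain equality built into $AB=BA$ (i.e.\ $D(AB)=D(BA)$) to upgrade $x\in D(A)$ to $x\in D(B)$; this last upgrade is exactly what is hidden in the paper's terse justification ``since $D(BA^{-1})\subset D(A)$'' for the identity $A+B=A(I+BA^{-1})$ on $D(B)$. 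What the paper's version buys is brevity and a reusable structural identity, which it exploits again in the self-adjointness and adjoint-of-sum theorems.
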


\begin{proof}
First note that the domain of $A+B$ is $D(A)\cap D(B)$. But
\[AB=BA\Longrightarrow A^{-1}B\subset BA^{-1}\Longrightarrow D(B)=D(A^{-1}B)\subset D(BA^{-1})\subset D(A),\]
hence the domain of $A+B$ is $D(B)$.

By Lemma \ref{invetrible closed}, $A$ is automatically closed. Then
we have
\begin{align*}
A+B&=A+BAA^{-1}\\
&=A+ABA^{-1}\\
&=A(I+BA^{-1}) \text{ (since $D(BA^{-1})\subset D(A)$)}.
\end{align*}
Since $A^{-1}$ is bounded (and $B$ is closed), $BA^{-1}$ is closed,
hence by Theorem \ref{adjoints and closedness A+B AB basic}
$I+BA^{-1}$ is closed so that $A(I+BA^{-1})$ is also closed by Lemma
\ref{AB closed}, proving the closedness of $A+B$ on $D(B)$.
\end{proof}

\begin{rema}
Before going further in this paper and since conditions of the type
$D(BA^{-1})\subset D(A)$ will be often met, we give an example of a
couple of two unbounded operators satisfying this latter condition.
Let $A$ and $B$ be the two unbounded closed operators defined by
\[Af(x)=(x^2+1)^2f(x) \text{ and } Bf(x)=(x^2+1)f(x)\]
on their respective domains
\[D(A)=\{f\in L^2(\R):~(x^2+1)^2f\in L^2(\R)\}\]
\text{ and } \[D(B)=\{f\in L^2(\R):~(x^2+1)f\in L^2(\R)\}.\] The
operator $B$ is invertible with a bounded inverse given by
\[B^{-1}f(x)=\frac{1}{1+x^2}f(x)\]
on the whole Hilbert space $L^2(\R)$. Then
\[D(AB^{-1})=\{f\in L^2(\R):~\frac{1}{1+x^2}f\in L^2(\R),\frac{(1+x^2)^2}{1+x^2}f=(1+x^2)f\in L^2(\R)\}=D(B).\]
\end{rema}

\begin{rema}
The hypothesis $D(BA^{-1})\subset D(A)$ cannot merely be dropped. As
a counterexample, let $A$ be any invertible closed operator with
domain $D(A)\subsetneqq H$ where $H$ is a complex Hilbert space. Let
$B=-A$. Then $A+B=0$ on $D(A)$ is not closed. Moreover, $AB=BA$ but
\[D(BA^{-1})=D(AA^{-1})=D(I)=H\not\subset D(A).\]
\end{rema}

We now pass to the self-adjointness of the sum. We have

\begin{thm}[cf. Lemma 4.15.1 in \cite{Putnam-book}]\label{self-adjoint A+B}
Let $A$ and $B$ be two unbounded self-adjoint operators such that
$B$ (for instance) is also invertible. If $AB=BA$ and
$D(AB^{-1})\subset D(B)$, then $A+B$ is self-adjoint on $D(A)$.
\end{thm}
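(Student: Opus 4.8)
My plan mirrors the structure of the proof of Theorem \ref{closed A+B}, but now exploiting self-adjointness to upgrade ``closed'' to ``self-adjoint.'' First I would pin down the domain of $A+B$: a priori it is $D(A)\cap D(B)$, but from $AB=BA$ one gets $B^{-1}A\subset AB^{-1}$, hence $D(A)=D(B^{-1}A)\subset D(AB^{-1})\subset D(B)$, so the domain is exactly $D(A)$, as claimed. Next, by Lemma \ref{invetrible closed} $B$ is closed (it is in fact self-adjoint by hypothesis, so this is automatic), and I would rewrite, exactly as in Theorem \ref{closed A+B} with the roles of $A$ and $B$ interchanged,
\begin{align*}
A+B&=ABB^{-1}+B\\
&=BAB^{-1}+B\\
&=(AB^{-1}+I)B \text{ (using $D(AB^{-1})\subset D(B)$).}
\end{align*}
This already shows $A+B$ is closed by Lemmas \ref{AB closed} and \ref{(AB)*=B*A*}; the point now is to identify its adjoint.

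The key computation is the adjoint of the factored form. Since $B$ is invertible with bounded inverse $B^{-1}\in B(H)$, Lemma \ref{(AB)*=B*A*} applies to the product $(AB^{-1}+I)B$ and gives
\[
(A+B)^*=\bigl[(AB^{-1}+I)B\bigr]^*=B^*\bigl(AB^{-1}+I\bigr)^*=B(AB^{-1}+I)^*,
\]
using $B=B^*$. Now $AB^{-1}$ is a product of the self-adjoint $A$ with the bounded self-adjoint $B^{-1}$; by Theorem \ref{adjoints and closedness A+B AB basic}(3) its adjoint is $(B^{-1})^*A^*=B^{-1}A$, and by Theorem \ref{adjoints and closedness A+B AB basic}(1) (the bounded-perturbation case, here perturbing by $I$) we get $(AB^{-1}+I)^*=B^{-1}A+I$. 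Hence
\[
(A+B)^*=B(B^{-1}A+I)=BB^{-1}A+B=A+B,
\]
where $BB^{-1}=I$ on all of $H$ because the inverse is everywhere defined, and the final sum has domain $D(A)$ again. Therefore $A+B$ is self-adjoint on $D(A)$.

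The main obstacle is bookkeeping with domains and the direction of the various operator inclusions — in particular making sure that each manipulation ($B\subset B^{-1}AB$ versus equality, the passage $B(B^{-1}A+I)=A+B$ rather than merely $\supset$) is an honest equality and not just an inclusion, which is precisely where the standing hypothesis that the inverse is \emph{everywhere defined} is used (so $BB^{-1}=I$, not $BB^{-1}\subset I$), together with the domain condition $D(AB^{-1})\subset D(B)$ that makes the factorization $A+B=(AB^{-1}+I)B$ an equality on $D(A)$. One should also check that $AB^{-1}$ is densely defined (it is, since $D(AB^{-1})\supset$ the dense set $D(A)$) so that Theorem \ref{adjoints and closedness A+B AB basic}(3) is legitimately invoked; everything else is routine.
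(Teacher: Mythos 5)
Your setup is sound and tracks the paper's strategy closely (same identification of the domain as $D(A)$, same idea of factoring out the invertible $B$ and dualizing with Lemma \ref{(AB)*=B*A*}), but there is a genuine gap at the one step that carries all the weight: the claim that $(AB^{-1})^*=(B^{-1})^*A^*$ ``by Theorem \ref{adjoints and closedness A+B AB basic}(3)''. That item says $(BA)^*=A^*B^*$ when $B$, the \emph{left} (outer) factor, is bounded. In $AB^{-1}$ the bounded operator $B^{-1}$ sits on the \emph{right}, and for a bounded right factor one only has the inclusion $(AB^{-1})^*\supset (B^{-1})^*A^*$ in general; equality is not automatic (Lemma \ref{(AB)*=B*A*} would give it only if $B^{-1}$ itself had an everywhere defined bounded inverse, which it does not since $B$ is unbounded). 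If you substitute the inclusion that is actually available, your chain reads $(A+B)^*=B\bigl[(AB^{-1})^*+I\bigr]\supset B(B^{-1}A+I)\supset A+B$, which is the \emph{trivial} half --- it already follows from $A^*+B^*\subset(A+B)^*$. The entire content of the theorem is the reverse inclusion $(A+B)^*\subset A+B$, and as written your argument does not deliver it.

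The repair is small and lands you essentially on the paper's proof. From $AB=BA$ you already extracted $B^{-1}A\subset AB^{-1}$; taking adjoints \emph{reverses} this to $(AB^{-1})^*\subset(B^{-1}A)^*$, and now Theorem \ref{adjoints and closedness A+B AB basic}(3) genuinely applies to $B^{-1}A$ (bounded factor on the left), giving $(B^{-1}A)^*=A^*(B^{-1})^*=AB^{-1}$ by self-adjointness. Hence $(AB^{-1})^*\subset AB^{-1}$, and therefore
\[
(A+B)^*=B\bigl[(AB^{-1})^*+I\bigr]\subset B(AB^{-1}+I)=B+BAB^{-1}=B+ABB^{-1}=A+B,
\]
where the hypothesis $D(AB^{-1})\subset D(B)$ is exactly what permits distributing $B$ over the sum; combined with $A+B=A^*+B^*\subset(A+B)^*$ this yields equality. (Your side remark that closedness already follows from Lemma \ref{AB closed} is also off, since that lemma wants the left factor invertible or the right factor bounded and $(AB^{-1}+I)B$ satisfies neither; but this is harmless, as self-adjointness implies closedness anyway.)
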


\begin{proof}
It is clear, thanks to $D(A)=D(B^{-1}A)\subset D(AB^{-1})\subset
D(B)$, that the domain of $A+B$ is $D(A)$. We have
\[B^{-1}\underbrace{BA}_{=AB}+B\subset A+B\]
and hence
\[(I+B^{-1}A)B\subset A+B.\]
So we have
\begin{align*}
(A+B)^*\subset & [(I+B^{-1}A)B]^*\\
=& B^*(I+B^{-1}A)^* \text{ (by Lemma \ref{(AB)*=B*A*} since $B$ is invertible)}\\
=& B^*[I+(B^{-1}A)^*] \text{ (by Theorem \ref{adjoints and
closedness
A+B AB basic})}\\
=&B^*[I+A^*(B^{-1})^*] \text{ (since $B^{-1}$ is bounded)}\\
=&B(I+AB^{-1}) \text{ (since $A$ and $B$ are self-adjoint)}\\
=&B+BAB^{-1} \text{ (for $D(AB^{-1})\subset D(B)$)}\\
=&B+ABB^{-1} \\
=&A+B.
\end{align*}
The following known fact
\[A+B\subset (A+B)^*,\]
then makes the "inclusion" an exact equality, establishing the
self-adjointness of $A+B$ on $D(A)$.
\end{proof}

\begin{rema}
The condition $D(AB^{-1})\subset D(A)$ cannot just be dispensed
with. As before, take $A$ to be any unbounded and invertible
self-adjoint operator and $B=-A$.
\end{rema}

\begin{rema}
Of course, writing $AB=BA$ does not mean that $A$ and $B$ strongly
commute, i.e. it does not signify that their spectral projections
commute. See e.g. \cite{DevNussbaum-von-Neumann}, \cite{FUG-19825},
\cite{Nelson-Anal-vectors}, \cite{RS1} and
\cite{Schmudgen-Friedrich-II}.

However, a result by Devinatz-Nussbaum-von Neumann (see
\cite{DevNussbaum-von-Neumann}) shows that if there exists a
self-adjoint operator $T$ such that $T\subseteq T_1T_2$, where $T_1$
and $T_2$ are self-adjoint, then $T_1$ and $T_2$ strongly commute.
Thus we have

\begin{pro}
Let $A$ and $B$ be two unbounded self-adjoint operators such that
$B$ (for instance) is also invertible. If $AB=BA$, then $A$ and $B$
strongly commute.
\end{pro}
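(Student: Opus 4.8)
The plan is to reduce the statement to the Devinatz--Nussbaum--von Neumann theorem quoted just above: it suffices to produce a \emph{single} self-adjoint operator $T$ with $T\subseteq AB$, for then (taking $T_1=A$ and $T_2=B$) the theorem forces $A$ and $B$ to strongly commute. The natural candidate is $T=AB$ itself, so the whole content of the proof is the claim that, under the present hypotheses, $AB$ is already self-adjoint --- remarkably, with no domain inclusion of the kind required in Theorem \ref{self-adjoint A+B}.

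To establish that $AB$ is self-adjoint I would simply compute its adjoint. Since $B$ is invertible with everywhere-defined bounded inverse $B^{-1}\in B(H)$, Lemma \ref{(AB)*=B*A*} applies with the \emph{inner} factor $B$ in the role of the invertible operator, giving $(AB)^*=B^*A^*$. As $A$ and $B$ are self-adjoint, $B^*A^*=BA$ as operators (same domain, same action), and the hypothesis $AB=BA$ then yields $(AB)^*=AB$, i.e.\ $AB$ is self-adjoint. (That $AB$ is densely defined, so that $(AB)^*$ makes sense, is guaranteed by the blanket convention of the paper.)

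It then remains only to invoke the Devinatz--Nussbaum--von Neumann result with $T:=AB=T_1T_2$, $T_1:=A$, $T_2:=B$: one has $T\subseteq T_1T_2$ (in fact equality) and $T$ is self-adjoint, whence $A$ and $B$ strongly commute, as desired.

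The one point I expect to carry the weight of the argument is the use of Lemma \ref{(AB)*=B*A*}: it is precisely the standing assumption that $B^{-1}$ is defined on all of $H$ (the paper's notion of invertibility) that upgrades $(AB)^*=B^*A^*$ from a mere inclusion to an equality, and this equality is exactly what turns the \emph{a priori} only symmetric operator $AB$ into a genuinely self-adjoint one. With a weaker notion of invertibility (as in \cite{Kato-Book}) one would recover only that $AB$ is symmetric, which is not enough to trigger the Devinatz--Nussbaum--von Neumann theorem.
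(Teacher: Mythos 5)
Your proposal is correct and coincides with the paper's own argument: both apply Lemma \ref{(AB)*=B*A*} (with the invertible factor $B$ on the right) to get $(AB)^*=B^*A^*=BA=AB$, conclude that $AB$ is self-adjoint, and then invoke the Devinatz--Nussbaum--von Neumann theorem. Your closing remark on why the everywhere-defined inverse is what upgrades symmetry to self-adjointness is a nice, accurate gloss, but the route is the same.
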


\begin{proof}
By Lemma \ref{(AB)*=B*A*}, we may write
\[(AB)^*=B^*A^*=BA=AB,\]
i.e. $AB$ is self-adjoint. By the Devinatz-Nussbaum-von Neumann
theorem, $A$ and $B$ strongly commute.
\end{proof}
\end{rema}

We can also give a result on the adjoint of the sum of two closed
operators. This generalizes the previous one as we will be
explaining in a remark below its proof. Besides it will be useful in
the case of the sum of two normal operators. We have

\begin{thm}\label{adjoint A+B}
Let $A$ and $B$ be two unbounded invertible operators such that
$AB=BA$. If $D(A^*(B^*)^{-1})\subset D(B^*)$, then
\[(A+B)^*=A^*+B^*.\]
\end{thm}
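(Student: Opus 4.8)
The plan is to mimic the argument used for Theorem \ref{self-adjoint A+B}, but now starting from the factorization on the $B$-side rather than assuming self-adjointness. First I would record the easy inclusion. Since $A$ and $B$ are invertible, $AB=BA$ gives $A^{-1}B\subset BA^{-1}$ (multiply $AB=BA$ on both sides by $A^{-1}$, using that $A^{-1}$ is everywhere defined and bounded), hence $D(B)=D(A^{-1}B)\subset D(BA^{-1})$; but I actually want to move things to the $B^*$-side, so instead I would first establish that $(A+B)^*\supset A^*+B^*$. This is immediate from Theorem \ref{adjoints and closedness A+B AB basic}(1): for any two densely defined operators with densely defined sum one has $A^*+B^*\subset(A+B)^*$. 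So the whole content is the reverse inclusion $(A+B)^*\subset A^*+B^*$.

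For the reverse inclusion I would exploit the commutation to write $A+B$ as an invertible operator times a bounded-perturbation-of-the-identity, exactly as in the proof of Theorem \ref{self-adjoint A+B}. Concretely, from $AB=BA$ and the invertibility of $B$ one gets $B^{-1}AB=A$ on a suitable domain, so
\[
A+B = B^{-1}AB + B = (I + B^{-1}A)B,
\]
with the domain identifications making this an equality of operators (here one needs $D(A^*(B^*)^{-1})\subset D(B^*)$ only at the adjoint stage; at this stage one needs the analogous containment on the non-adjoint side, which follows from $AB=BA$ plus invertibility as in Theorem \ref{closed A+B}). Then I would take adjoints:
\[
(A+B)^* \subset [(I+B^{-1}A)B]^* = B^*(I+B^{-1}A)^*
\]
by Lemma \ref{(AB)*=B*A*}, since $B$ is invertible with everywhere-defined bounded inverse. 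Next, by Theorem \ref{adjoints and closedness A+B AB basic}(1) (the operator $B^{-1}A$ being preceded by the bounded $B^{-1}$, or rather using that $(B^{-1}A)^*\supset A^*(B^{-1})^* = A^*(B^*)^{-1}$ and combining with item (4)) I would identify $(I+B^{-1}A)^* = I + (B^{-1}A)^*$, and then $(B^{-1}A)^* = A^*(B^{-1})^* = A^*(B^*)^{-1}$ because $B^{-1}$ is bounded. This gives
\[
(A+B)^* \subset B^*\bigl(I + A^*(B^*)^{-1}\bigr) = B^* + B^*A^*(B^*)^{-1}.
\]
Finally, using the hypothesis $D(A^*(B^*)^{-1})\subset D(B^*)$ I can write $B^*A^*(B^*)^{-1} = A^*B^*(B^*)^{-1} = A^*$ — here I use that $A^*B^* = B^*A^*$, which follows by taking adjoints in $AB=BA$ via Lemma \ref{(AB)*=B*A*} (both $A$ and $B$ are invertible, so $(AB)^* = B^*A^*$ and $(BA)^* = A^*B^*$, and $AB=BA$ forces $B^*A^* = A^*B^*$). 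Hence $(A+B)^* \subset B^* + A^* = A^* + B^*$, and together with the trivial reverse inclusion we conclude $(A+B)^* = A^*+B^*$.

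The step I expect to be the main obstacle is the bookkeeping of domains: making sure that each displayed "equality" $A+B = (I+B^{-1}A)B$, $(A+B)^* \subset B^*(I+B^{-1}A)^*$, etc., is an honest equality or inclusion of \emph{operators} (domains included), not just a formal identity on vectors. In particular one must check that the domain hypothesis $D(A^*(B^*)^{-1})\subset D(B^*)$ is exactly what is needed to turn $B^*A^*(B^*)^{-1}$ into $A^*$ with the right domain, and that the non-adjoint side enjoys the analogous containment (which is why we need $B$ — not just $A$ — invertible, together with $AB=BA$). The rest is a direct transcription of the Lemma \ref{(AB)*=B*A*} / Theorem \ref{adjoints and closedness A+B AB basic} machinery already used above.
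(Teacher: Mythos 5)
Your proposal is correct and follows essentially the same route as the paper: the factorization $(I+B^{-1}A)B\subset A+B$, taking adjoints via Lemma \ref{(AB)*=B*A*}, identifying $(B^{-1}A)^*=A^*(B^*)^{-1}$, invoking the domain hypothesis to distribute $B^*$, and using $A^*B^*=B^*A^*$ (obtained from Lemma \ref{(AB)*=B*A*} and $AB=BA$) to finish. The only small imprecision is your claim that $A+B=(I+B^{-1}A)B$ is an equality of operators: in general $D((I+B^{-1}A)B)=D(AB)$ may be strictly smaller than $D(A)\cap D(B)$, but the inclusion $(I+B^{-1}A)B\subset A+B$ is all that is needed (and all the paper uses), since taking adjoints reverses it.
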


\begin{proof}
The idea of proof is akin to that of Theorem \ref{self-adjoint A+B}.
First, we always have
\[A^*+B^*\subset (A+B)^*.\]
Second, since $A$ and $B$ are both invertible, by Lemma
\ref{(AB)*=B*A*}, we have
\[AB=BA\Longrightarrow A^*B^*=B^*A^*.\]
Now write
\[B^{-1}\underbrace{BA}_{=AB}+B\subset A+B\]
so that
\[(I+B^{-1}A)B\subset A+B\]
and hence

\begin{align*}
(A+B)^*\subset & [(I+B^{-1}A)B]^*\\
=& B^*(I+B^{-1}A)^* \text{ (by Lemma \ref{(AB)*=B*A*} since $B$ is invertible)}\\
=& B^*[I+(B^{-1}A)^*] \text{ (by Theorem \ref{adjoints and
closedness
A+B AB basic})}\\
=&B^*[I+A^*(B^{-1})^*] \text{ (since $B^{-1}$ is bounded)}\\
=&B^*+B^*A^*(B^*)^{-1} \text{ (as $D(A^*(B^*)^{-1})\subset D(B^*)$)}\\
=&B^*+A^*B^*(B^*)^{-1} \text{ (for $A^*B^*=B^*A^*$)}\\
=&A^*+B^*.
\end{align*}
The proof is therefore complete.
\end{proof}

\begin{rema}
We could have supposed that only $B$ is invertible, but then we
would have added the hypothesis $B^*A^*\subset A^*B^*$. This latter
observation tells us that Theorem \ref{adjoint A+B} generalizes in
fact Theorem \ref{self-adjoint A+B}.
\end{rema}

\begin{rema}
The condition $D(A^*(B^*)^{-1})\subset D(B^*)$ cannot just be
dispensed with. As before, take $A$ to be any unbounded closed and
invertible operator and $B=-A$. Then $D(A^*(B^*)^{-1})\subset
D(B^*)$ is not satisfied. At the same time observe that
\[D(A^*-A^*)=D(A^*)\neq D((A-A)^*)=D(0^*)=H,\]
where $H$ is the whole Hilbert space.
\end{rema}

In \cite{mortad-CAOT-sum-normal}, we proved the following result
\begin{thm}\label{normal A+B, Unbd A, Unbd B-CAO-2012}
Let $A$ and $B$ be two unbounded normal operators such that $B$ is
$A$-bounded with relative bound smaller than one.  Assume that
$BA^*\subset A^*B$ and $B^*A\subset AB^*$.  Then $A+B$ is normal on
$D(A)$.
\end{thm}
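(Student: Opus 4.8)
The plan is to verify the standard criterion for normality: a densely defined closed operator $N$ is normal if and only if $D(N)=D(N^*)$ and $\|Nx\|=\|N^*x\|$ for every $x\in D(N)$. Apply it to $N=A+B$. First I would settle the domains and closedness. Since $A$ is normal it is closed, and since $B$ is $A$-bounded with relative bound $b<1$, say $\|Bx\|\le a\|x\|+b\|Ax\|$ on $D(A)$, the classical stability theorem for relatively bounded perturbations gives that $A+B$ is closed. Normality of $A$ gives $D(A)=D(A^*)$ and $\|Ax\|=\|A^*x\|$ on $D(A)$, normality of $B$ gives $D(B)=D(B^*)$ and $\|Bx\|=\|B^*x\|$ on $D(B)$, and the relative boundedness yields $D(A)\subseteq D(B)=D(B^*)$; hence $D(A+B)=D(A)$.

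Next I would identify the adjoint of the sum. From $\|B^*x\|=\|Bx\|\le a\|x\|+b\|Ax\|=a\|x\|+b\|A^*x\|$ one sees that $B^*$ is $A^*$-bounded with the same relative bound $b<1$, so $A^*+B^*$ is closed on $D(A^*)=D(A)$. One always has $A^*+B^*\subseteq(A+B)^*$; and since the whole hypothesis set is invariant under $(A,B)\mapsto(A^*,B^*)$ (indeed $(B^*)^*A^*=BA^*\subseteq A^*B=A^*(B^*)^*$ and $B^*(A^*)^*=B^*A\subseteq AB^*=(A^*)^*B^*$), the relative-bound-$<1$ perturbation machinery, applied symmetrically, promotes this inclusion to the equality $(A+B)^*=A^*+B^*$, again with domain $D(A)$. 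In particular $D(N)=D(N^*)=D(A)$, so only the norm identity remains.

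For $x\in D(A)$, expanding the squared norms and cancelling $\|Ax\|^2-\|A^*x\|^2=0$ and $\|Bx\|^2-\|B^*x\|^2=0$ gives
\[\|(A+B)x\|^2-\|(A^*+B^*)x\|^2=2\re\langle Ax,Bx\rangle-2\re\langle A^*x,B^*x\rangle,\]
so the theorem reduces to showing $\re\langle Ax,Bx\rangle=\re\langle A^*x,B^*x\rangle$ on $D(A)$. This is where the commutation hypotheses enter. On a suitable dense set $\mathcal D\subseteq D(A)$ of vectors $x$ for which $Ax,A^*x\in D(B)$ and $Bx,B^*x\in D(A)$ (equivalently $x\in D(BA^*)\cap D(B^*A)$, etc.) and which can be taken to be a core for $A$ (hence for $A+B$), one moves the outer operator onto $x$ to get
\[2\re\langle Ax,Bx\rangle=\langle x,(A^*B+B^*A)x\rangle,\qquad 2\re\langle A^*x,B^*x\rangle=\langle x,(AB^*+BA^*)x\rangle;\]
then $BA^*\subseteq A^*B$ forces $A^*Bx=BA^*x$ and $B^*A\subseteq AB^*$ forces $AB^*x=B^*Ax$ for $x\in\mathcal D$, so both right-hand sides equal $\langle x,(BA^*+B^*A)x\rangle$, whence the identity holds on $\mathcal D$. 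Finally, since $|\re\langle Ax,Bx\rangle|\le\|Ax\|\,\|Bx\|\le\|Ax\|(a\|x\|+b\|Ax\|)$ and similarly for the other term, both sides are continuous in the graph norm of $A$; as $\mathcal D$ is a core for $A$, the identity extends to all of $D(A)$, completing the proof.

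The main obstacle I anticipate is the construction in the last step: producing a genuinely dense subset $\mathcal D$, dense in $D(A)$ in the graph norm, on which the ``integration by parts'' with $\langle Ax,Bx\rangle$ is rigorous. Because $A$ and $B$ need not strongly commute, there is no joint spectral representation to fall back on, and $\mathcal D$ must be extracted (e.g. from ranges of bounded functions, or resolvents, of $A$ and $B$) using precisely the inclusions $BA^*\subseteq A^*B$ and $B^*A\subseteq AB^*$ — with care taken to invoke only inclusions, not equalities, of the operators involved. A second, lesser technical point is the upgrade of $A^*+B^*\subseteq(A+B)^*$ to an equality, which again rests on applying the relative-bound-$<1$ perturbation theory symmetrically to $(A,B)$ and $(A^*,B^*)$.
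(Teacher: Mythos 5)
First, note that the paper does not actually reprove this theorem here: it is quoted from \cite{mortad-CAOT-sum-normal}, and the text only records that the proof there relies on the Hess--Kato theorem \cite{HK}, used precisely to obtain the closedness of $A+B$ and the identity $(A+B)^*=A^*+B^*$. Your overall plan (closedness and domain bookkeeping from relative boundedness, the adjoint-sum identity, then the criterion that a closed densely defined $N$ is normal iff $D(N)=D(N^*)$ and $\|Nx\|=\|N^*x\|$) is in the same spirit, but two steps are genuinely incomplete as written. The first is the passage from $A^*+B^*\subset (A+B)^*$ to equality: ``applying the perturbation machinery symmetrically'' does not do it. Applying the same elementary reasoning to the pair $(A^*,B^*)$ only yields $A+B\subset (A^*+B^*)^*$, and taking adjoints returns exactly the inclusion you already had; nothing in your argument controls $D((A+B)^*)$ from above. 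This is precisely the nontrivial input that the original proof takes from Hess--Kato (relative bound $<1$ implies $(A+B)^*=A^*+B^*$); without invoking that theorem, or supplying an equivalent argument, your claim $D(N)=D(N^*)$ is unproved, and it is essential for the normality criterion.

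The second gap is at the heart of your argument: the identity $\re\langle Ax,Bx\rangle=\re\langle A^*x,B^*x\rangle$ is established only on a hypothetical core $\mathcal{D}$ that you never construct, and you yourself flag this as the main obstacle. As it stands this is a genuine hole, since the hypotheses $BA^*\subset A^*B$ and $B^*A\subset AB^*$ are mere inclusions and give no a priori information on how large domains such as $D(B^*A)$ are. The gap is, however, repairable without any joint spectral calculus, using the spectral theorem for $A$ alone: take $\mathcal{D}=D(A^2)$, which is a core for $A$ (truncate with the spectral projections of $A$ associated with $\{z:|z|\le n\}$). For $x\in D(A^2)$ one has $Ax\in D(A)\subset D(B)=D(B^*)$, hence $x\in D(B^*A)$, and then
\[\langle Ax,Bx\rangle=\langle B^*Ax,x\rangle=\langle AB^*x,x\rangle=\langle B^*x,A^*x\rangle\]
by the inclusion $B^*A\subset AB^*$ together with $D(A)=D(A^*)$; taking real parts gives the identity on $\mathcal{D}$, and your graph-norm continuity argument (which is correct) extends it to all of $D(A)$. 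Note that this route uses only one of the two commutation hypotheses at this stage. With these two repairs --- Hess--Kato quoted for $(A+B)^*=A^*+B^*$, and $\mathcal{D}=D(A^2)$ exhibited as the core --- your proof goes through and is essentially consistent with the strategy the paper attributes to \cite{mortad-CAOT-sum-normal}.
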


To prove it, we had to use a theorem by Hess-Kato (see \cite{HK}),
mainly for the closedness of $A+B$ and to have $(A+B)^*=A^*+B^*$.
Thanks to Theorems \ref{closed A+B} \& \ref{adjoint A+B} we may
avoid the use of that theorem. Besides we are able here to prove a
new version of the Fuglede theorem where all operators involved are
\textit{unbounded} which will allow us to establish the normality of
the sum of two normal operators. We digress a bit to say that
another all unbounded-operator-version of Fuglede-Putnam is the
Fuglede-Putnam-Mortad theorem that may be found in
\cite{Mortad-Fuglede-Putnam-CAOT-2011}, cf. \cite{FUG,PUT}.

Here is the promised result
\begin{thm}\label{Fuglede-NEW-ALL-UNBD}
Let $A$ and $B$ be two unbounded normal and invertible operators.
Then
\[AB=BA\Longrightarrow AB^*=B^*A \text{ and } BA^*=A^*B.\]
\end{thm}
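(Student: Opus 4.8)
The plan is to push the problem down to the bounded setting by passing to inverses, and then to invoke the classical (bounded) Fuglede theorem. Since $A$ and $B$ are invertible in the everywhere-defined sense adopted here, $A^{-1},B^{-1}\in B(H)$, and, exactly as in the proof of Lemma \ref{(AB)*=B*A*}, $(A^{-1})^*=(A^*)^{-1}$ and $(B^{-1})^*=(B^*)^{-1}$. Because $A$ is normal, $A^*A=AA^*$ is invertible (a product of invertible operators), so
\[(A^{-1})(A^{-1})^*=(A^*A)^{-1}=(AA^*)^{-1}=(A^{-1})^*(A^{-1}),\]
i.e. $A^{-1}$ is a bounded normal operator; likewise $B^{-1}$.

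Next I would establish that these two bounded normal operators commute. A short check with the domains, using $AB=BA$ and the fact that $A^{-1},B^{-1}$ are everywhere defined, shows that $AB$ and $BA$ are invertible with $(AB)^{-1}=B^{-1}A^{-1}$ and $(BA)^{-1}=A^{-1}B^{-1}$; since $AB=BA$ and the (everywhere-defined bounded) inverse is unique, $A^{-1}B^{-1}=B^{-1}A^{-1}$. Now apply the classical Fuglede theorem \cite{FUG} twice. Applied to the normal operator $A^{-1}$ together with the bounded operator $B^{-1}$ that commutes with it, it gives $(A^{-1})^*B^{-1}=B^{-1}(A^{-1})^*$, that is, $(A^*)^{-1}B^{-1}=B^{-1}(A^*)^{-1}$. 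Applied to the normal operator $B^{-1}$ together with $A^{-1}$, it gives $(B^*)^{-1}A^{-1}=A^{-1}(B^*)^{-1}$.

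Finally I would invert these two identities. Since $A$ is normal and invertible, $A^*$ is invertible with bounded inverse $(A^*)^{-1}=(A^{-1})^*$, and similarly for $B^*$; hence the same kind of domain computation as above shows that $(A^*)^{-1}B^{-1}$ is the inverse of $BA^*$ while $B^{-1}(A^*)^{-1}$ is the inverse of $A^*B$. As these two bounded operators coincide and the everywhere-defined bounded inverse is unique, $BA^*=A^*B$. Inverting $(B^*)^{-1}A^{-1}=A^{-1}(B^*)^{-1}$ in the same fashion yields $AB^*=B^*A$, which is the desired conclusion.

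The step that requires the most care is the passage to and from inverses: one must verify that $AB$, $BA$, and the various products of a bounded operator with its (a priori unbounded) inverse are genuinely invertible in the sense of this paper, with the expected two-sided inverses, and that such an everywhere-defined bounded inverse is unique, so that equality of the inverses forces equality of the operators. Once this bookkeeping with domains and adjoints is settled, the only non-formal ingredient is the classical Fuglede theorem for bounded normal operators, applied to $A^{-1}$ and to $B^{-1}$.
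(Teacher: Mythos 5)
Your proof is correct, but it takes a genuinely different route from the one in the paper. The paper applies the classical Fuglede theorem in its mixed bounded/unbounded form: from $AB=BA$ it extracts $B^{-1}A\subset AB^{-1}$, invokes Fuglede for the \emph{unbounded} normal operator $A$ and the bounded operator $B^{-1}$ to get $B^{-1}A^*\subset A^*B^{-1}$, hence $A^*B\subset BA^*$, then uses Lemma \ref{(AB)*=B*A*} to pass to adjoints and obtain $AB^*\subset B^*A$; the reverse inclusions come from interchanging the roles of $A$ and $B$. You instead push the whole problem down to $B(H)$: you verify that $A^{-1}$ and $B^{-1}$ are bounded normal operators that commute, apply the purely bounded Fuglede theorem to each, and recover the stated identities by inverting back, using that an operator is determined by its everywhere-defined bounded inverse. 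Your route needs only the bounded--bounded version of Fuglede (rather than the version for an unbounded normal operator) and yields the equalities directly rather than as two opposite inclusions, at the price of more bookkeeping: normality of $A^{-1}$, invertibility of $AB$, $BA$, $A^*B$, $BA^*$ and the like with the expected two-sided inverses, and the uniqueness argument. All of these checks do go through under the paper's convention that inverses are bounded and everywhere defined, and they are the genuinely load-bearing part of your argument, so they should be written out rather than left as ``a short check.'' It is worth noting that the remark immediately following the theorem in the paper sketches essentially your reduction (commuting inverses plus the bounded Fuglede theorem) as the way to handle the variant where only one of the two operators is assumed normal, so your argument is very much in the spirit of the author's own commentary.
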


\begin{proof}
Since $B$ is invertible, we may write
\[AB=BA \Longrightarrow B^{-1}A\subset AB^{-1}.\]
Since $B^{-1}$ is bounded and $A$ is unbounded and normal, by the
classic Fuglede theorem we have
\[B^{-1}A\subset AB^{-1}\Longrightarrow B^{-1}A^*\subset A^*B^{-1}.\]
Therefore,
\[A^*B\subset BA^*.\]
But $B$ is invertible, then by Lemma \ref{(AB)*=B*A*} we may obtain
\[AB^*\subset(BA^*)^*\subset (A^*B)^*=B^*A.\]

Interchanging the roles of $A$ and $B$, we shall get
\[B^*A\subset AB^* \text{ and } BA^*\subset A^*B.\]

Thus
\[B^*A=AB^* \text{ and } BA^*=A^*B.\]
\end{proof}

\begin{rema}
A similar result holds with one operator assumed normal. The key
point again is that the inverse is bounded and everywhere defined.
So since $AB=BA$, we obtain $A^{-1}B^{-1}\subset B^{-1}A^{-1}$.
Since these operators are everywhere defined, we get
$A^{-1}B^{-1}=B^{-1}A^{-1}$. The rest follows by the bounded version
of Fuglede theorem.

However, if we do not assume the bounded inverse defined everywhere,
then $AB=BA$ does not imply that $A^{-1}B^{-1}=B^{-1}A^{-1}$. Here
is a counterexample which appeared in \cite{Schmudgen-Friedrich-II}.
It reads:

Let $S$ be the unilateral shift on the Hilbert space $\ell^{2}$. We
may then easily show that both $S+S^*$ and $S-S^*$ are injective.
Hence $A=(S+S^*)^{-1}$ and  $B=i(S-S^*)^{-1}$ are unbounded
self-adjoint such that $AB=BA$. Nonetheless
\[A^{-1}B^{-1}\neq B^{-1}A^{-1}\] since otherwise $S$ and $S^*$ would
commute!
\end{rema}

As a first consequence of Theorem \ref{Fuglede-NEW-ALL-UNBD}, we
have

\begin{thm}\label{normal A+B NEW}
Let $A$ and $B$ be two unbounded invertible normal operators with
domains $D(A)$ and $D(B)$ respectively. If $AB=BA$, $D(A)\subset
D(BA^*)$ and $D(AB^{-1})=D(A(B^*)^{-1})\subset D(B)$, then $A+B$ is
normal on $D(A)$.
\end{thm}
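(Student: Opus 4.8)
The strategy is to verify the two defining properties of a normal operator for $A+B$: first that $A+B$ is closed on $D(A)$, and second that $(A+B)(A+B)^*=(A+B)^*(A+B)$. The hypotheses have been arranged so that the earlier machinery of the paper applies almost verbatim.

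First I would establish closedness and identify the domain. Since $A$ is invertible and normal (hence closed), and $B$ is normal (hence closed), Theorem~\ref{closed A+B} applies provided $D(BA^{-1})\subset D(A)$; but this is exactly what $D(AB^{-1})\subset D(B)$ gives after interchanging the roles of $A$ and $B$ (the relation $AB=BA$ is symmetric, and normality is symmetric in $A$, $A^*$). Actually, the cleanest route is to apply Theorem~\ref{closed A+B} with $B$ playing the role of the invertible operator: from $AB=BA$ we get $B^{-1}A\subset AB^{-1}$, so $D(A)=D(B^{-1}A)\subset D(AB^{-1})\subset D(B)$, whence $D(A+B)=D(A)$, and the closedness of $A+B$ on $D(A)$ follows.

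Next I would compute the adjoint. Theorem~\ref{adjoint A+B} requires $A,B$ invertible with $AB=BA$ and $D(A^*(B^*)^{-1})\subset D(B^*)$. Since $A$ and $B$ are normal, $A^*$ and $B^*$ are also invertible (with $(A^*)^{-1}=(A^{-1})^*$ bounded), $\|A^*x\|=\|Ax\|$, etc.; and by Theorem~\ref{Fuglede-NEW-ALL-UNBD}, $AB=BA$ yields $A^*B^*=B^*A^*$, so the hypotheses of Theorem~\ref{adjoint A+B} hold once we check $D(A^*(B^*)^{-1})\subset D(B^*)$. Here the assumption $D(A(B^*)^{-1})\subset D(B)$ together with normality ($D(A^*(B^*)^{-1})=D(A(B^*)^{-1})$ since $D(A^*)=D(A)$ and the inner operator has the same range behaviour, and $D(B^*)=D(B)$) delivers exactly this. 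Hence $(A+B)^*=A^*+B^*$, defined on $D(A^*)\cap D(B^*)=D(A)$ by the symmetric argument.

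Finally I would verify $AA^*$-type identity. Using Theorem~\ref{Fuglede-NEW-ALL-UNBD} we have the full commutation package: $AB=BA$, $AB^*=B^*A$, $BA^*=A^*B$, and taking adjoints (or re-applying the theorem) $A^*B^*=B^*A^*$. Then, on the appropriate domain,
\[
(A+B)(A+B)^*=(A+B)(A^*+B^*)=AA^*+AB^*+BA^*+BB^*
\]
and
\[
(A+B)^*(A+B)=(A^*+B^*)(A+B)=A^*A+A^*B+B^*A+B^*B.
\]
By normality of $A$ and of $B$, $AA^*=A^*A$ and $BB^*=B^*B$; by the Fuglede relations $AB^*+BA^*$ and $A^*B+B^*A$ match term by term ($AB^*=B^*A$ and $BA^*=A^*B$). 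So the two products agree as operators; what remains is the domain bookkeeping, i.e. checking that $D((A+B)(A+B)^*)=D((A+B)^*(A+B))$, which is where the condition $D(A)\subset D(BA^*)$ is needed to control cross terms like $BA^*$ on $D(A^*A)$.

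\textbf{Main obstacle.} The genuine difficulty is not the formal algebra but the domain equalities: one must show that the various sums of products are defined on exactly the same set and that no spurious enlargement of domains occurs when passing between $(A+B)^*$, $A^*+B^*$, and the composite products. In particular, justifying $D((A+B)(A+B)^*)=D((A+B)^*(A+B))$ rigorously — rather than merely that the two agree where both are defined — is the step I expect to require the most care, and this is precisely what the hypotheses $D(A)\subset D(BA^*)$ and $D(AB^{-1})=D(A(B^*)^{-1})\subset D(B)$ are designed to supply.
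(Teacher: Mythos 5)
Your plan is correct and follows essentially the same route as the paper: closedness via Theorem~\ref{closed A+B}, the identity $(A+B)^*=A^*+B^*$ via Theorem~\ref{adjoint A+B} (using normality to convert $D(A(B^*)^{-1})\subset D(B)$ into $D(A^*(B^*)^{-1})\subset D(B^*)$), and the commutation relations from Theorem~\ref{Fuglede-NEW-ALL-UNBD} to expand both products. The only piece you leave unwritten is the domain bookkeeping you flag at the end, and the paper settles it exactly as you anticipate: the hypothesis $D(A)\subset D(BA^*)$ combined with the Fuglede relations and normality gives $D(A)\subset D(BA^*)=D(A^*B)=D(AB)=D(BA)=D(B^*A)$, which is what justifies distributing each of $A^*$, $B^*$, $A$, $B$ over the relevant sums so that the two expanded products have the same domain.
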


\begin{proof}
To prove that $A+B$ is normal, we must say why $A+B$ is closed and
show that
\[(A+B)^*(A+B)=(A+B)(A+B)^*.\]
$A+B$ is closed thanks to Theorem \ref{closed A+B}. Also, since $A$
and $B$ are both normal, we obviously have
\[D(A(B^*)^{-1})\subset D(B)\Longrightarrow D(A^*(B^*)^{-1})\subset D(B^*)\]
so that Theorem \ref{self-adjoint A+B} applies and yields
\[(A+B)^*=A^*+B^*.\]

Since $AB=BA$, Theorem \ref{Fuglede-NEW-ALL-UNBD} implies that
\[AB^*=B^*A \text{ and } BA^*=A^*B.\]

Since $D(A)\subset D(BA^*)$, we have
\[D(A)\subset D(A^*B)~(\subset D(B))\]
\text{ and }
\[D(A)\subset D(BA^*)=D(A^*B)=D(AB)=D(BA)=D(B^*A).\]

All these domain inclusions allow us to have

\begin{enumerate}
  \item $A^*(A+B)=A^*A+A^*B$ and $B^*(A+B)=B^*A+B^*B$.
  \item $A(A^*+B^*)=AA^*+AB^*$ and $B(A^*+B^*)=BA^*+BB^*$.
\end{enumerate}

Hence we may write
\[
(A+B)^*(A+B)=A^*A+A^*B+B^*A+B^*B
\]
and
\[(A+B)(A+B)^*=AA^*+AB^*+BA^*+BB^*.\]

Thus

\[(A+B)^*(A+B)=(A+B)(A+B)^*.\]
\end{proof}

As another consequence of Theorem \ref{Fuglede-NEW-ALL-UNBD}, we
have the following result on the normality of the product of two
unbounded normal operators.

\begin{cor}\label{AB normal A B Invertible}
Let $A$ and $B$ be two unbounded invertible normal operators. If
$BA=AB$, then $BA$ (and $AB$) is normal.
\end{cor}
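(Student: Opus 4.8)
The plan is to reduce the normality of $BA$ to the normality and strong commutativity already packaged in Theorem \ref{Fuglede-NEW-ALL-UNBD}, and then to verify the two defining conditions for normality, namely closedness of $BA$ and the operator identity $(BA)^*(BA)=(BA)(BA)^*$. Closedness is immediate: since $A$ and $B$ are invertible (hence closed by Lemma \ref{invetrible closed}) and $A$ is invertible, Lemma \ref{AB closed}(1) gives that $BA$ is closed. Moreover, since $B$ is invertible with bounded everywhere-defined inverse, Lemma \ref{(AB)*=B*A*} yields the clean adjoint formula $(BA)^*=A^*B^*$, with no mere inclusion to worry about; this exact equality is what makes the bookkeeping below work.

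Next I would extract the commutation relations. From $BA=AB$ and Theorem \ref{Fuglede-NEW-ALL-UNBD} we get $AB^*=B^*A$ and $BA^*=A^*B$; taking adjoints of these (or re-running the theorem) also gives $A^*B^*=B^*A^*$. Together with the normality relations $AA^*=A^*A$ and $BB^*=B^*B$, we now have a full commuting family $\{A,A^*,B,B^*\}$ in the sense of the "$XY=YX$" equalities of domains and values. The computation is then
\[
(BA)^*(BA)=A^*B^*BA=A^*BB^*A=BA^*B^*A=BA^*AB^*=BAA^*B^*=(BA)(BA)^*,
\]
where at each step I use one of the commutation identities and, crucially, the fact that $(BA)^*=A^*B^*$ exactly (so the left and right sides are genuine equalities of operators, defined on the same domain). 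One should also record $(BA)(BA)^*=BAA^*B^*$, which follows from $(BA)^*=A^*B^*$ together with $AA^*=A^*A$ and the associativity of the product on the relevant domain.

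The main obstacle, and the place requiring the most care, is domain tracking: each rewriting step above is only licensed if the relevant product domains actually coincide, and for unbounded operators equalities like $A^*B^*BA=A^*BB^*A$ require checking that $D(A^*B^*BA)=D(A^*BB^*A)$, not just agreement where both sides make sense. The invertibility of both $A$ and $B$ is what saves us here: each of the intertwining relations from Theorem \ref{Fuglede-NEW-ALL-UNBD}, when combined with a bounded everywhere-defined inverse, upgrades to an \emph{equality} of operators (as in the Remark following that theorem, where $A^{-1}B^{-1}=B^{-1}A^{-1}$), so the chain of domain identifications propagates without shrinkage. I would therefore spell out, once, that invertibility turns all the relevant inclusions into equalities, and then present the displayed chain above as the proof, noting that $D((BA)^*(BA))=D(A^*B^*BA)$ is carried step by step to $D(BAA^*B^*)=D((BA)(BA)^*)$, so that $BA$ is normal; the statement for $AB$ follows by symmetry since $AB=BA$.
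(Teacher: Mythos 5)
Your proposal is correct and follows essentially the same route as the paper's own proof: closedness of $BA$ from Lemma \ref{AB closed} (or invertibility), the exact adjoint formula $(BA)^*=A^*B^*$ from Lemma \ref{(AB)*=B*A*}, the commutation relations $AB^*=B^*A$ and $BA^*=A^*B$ from Theorem \ref{Fuglede-NEW-ALL-UNBD}, and then the same chain of factor swaps. Your extra remarks on domain tracking are a sensible precaution but do not change the argument.
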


\begin{rema}
By a result of Devinatz-Nussbaum (see \cite{DevNussbaum}), if $A$,
$B$ and $N$ are normal where $N=AB=BA$, then $A$ and $B$ strongly
commute. Hence we have as a consequence:

\begin{cor}
Let $A$ and $B$ be two unbounded invertible normal operators. If
$BA=AB$, then $A$ and $B$ strongly commute.
\end{cor}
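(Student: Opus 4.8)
The plan is to combine the immediately preceding result, Corollary \ref{AB normal A B Invertible}, with the Devinatz-Nussbaum theorem quoted in the remark just above this statement. Since the conclusion (\emph{strong} commutation, i.e.\ commutation of spectral projections) is a much stronger assertion than the formal commutation $AB=BA$ we are given, the whole strategy is to manufacture a \emph{third} normal operator $N$ with $N=AB=BA$ and then appeal to the cited structural theorem.

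\medskip

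\noindent\textbf{Step 1 (produce a normal product).} First I would observe that $A$ and $B$ are two unbounded invertible normal operators with $BA=AB$, so the hypotheses of Corollary \ref{AB normal A B Invertible} are met verbatim. Applying that corollary immediately yields that the product $BA$ (equivalently $AB$) is \emph{normal}. I would record this by setting $N:=AB=BA$, a well-defined unbounded operator since the two products coincide by hypothesis, and by the corollary it is normal.

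\medskip

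\noindent\textbf{Step 2 (invoke Devinatz-Nussbaum).} Now I have three normal operators at hand: $A$, $B$, and $N$, related by the single identity $N=AB=BA$. This is precisely the configuration required by the result of Devinatz-Nussbaum recalled in the remark preceding this corollary (see \cite{DevNussbaum}): if $A$, $B$ and $N$ are all normal and $N=AB=BA$, then $A$ and $B$ strongly commute. Feeding in the normality of $A$ and $B$ (given) and the normality of $N$ (obtained in Step 1), the theorem returns exactly the desired conclusion, namely that the spectral projections of $A$ and those of $B$ commute.

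\medskip

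\noindent The only substantive point — and hence the main obstacle — is \emph{not} the final invocation, which is immediate, but the verification in Step 1 that the product is normal. That difficulty, however, has already been absorbed into Corollary \ref{AB normal A B Invertible}, itself resting on the new Fuglede-type theorem (Theorem \ref{Fuglede-NEW-ALL-UNBD}) and crucially on the everywhere-defined bounded inverses. Once normality of the product is in hand, nothing further is required beyond citing Devinatz-Nussbaum, so the proof is essentially a two-line deduction.
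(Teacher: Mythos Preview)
Your proposal is correct and follows precisely the route the paper intends: apply Corollary \ref{AB normal A B Invertible} to obtain that $N:=AB=BA$ is normal, and then invoke the Devinatz--Nussbaum theorem (\cite{DevNussbaum}) to conclude strong commutation. The paper itself presents this corollary only as an immediate consequence inside the remark, with no further argument, so your two-step deduction matches exactly.
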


\end{rema}

Now we prove Corollary \ref{AB normal A B Invertible}:

\begin{proof}
We first note that $BA$ is closed thanks to Lemma \ref{AB closed}
(or simply since $BA$ is invertible hence Lemma \ref{invetrible
closed} applies!), hence so is $AB$. Lemma \ref{(AB)*=B*A*} then
gives us $(AB)^*=B^*A^*$. By Theorem \ref{Fuglede-NEW-ALL-UNBD} we
may then write
\begin{align*}
(AB)^*AB&=B^*A^*AB\\
&=B^*AA^*B\\
&=AB^*A^*B\\
&=AB^*BA^*\\
&=ABB^*A^*\\
&=(AB)(AB)^*,\\
\end{align*}
establishing the normality of $AB$.
\end{proof}

\begin{rema}
In \cite{Mortad-Madani} we had the same result with the extra
conditions $D(A),D(B)\subset D(BA)$. Here we have showed that the
last two conditions are not essential. Hence Corollary \ref{AB
normal A B Invertible} is an improvement of the result that appeared
in \cite{Mortad-Madani}.
\end{rema}

\begin{rema}
Let us give an example that shows the importance of assuming $A$ and
$B$ invertible. Let $B$ be the operator defined by
\[Bf(x)=-xf'(x)-f(x)\]
on its domain
\[D(B)=\{f\in L^2(\R):~xf'\in
L^2(\R)\}\] where the derivative is taken in the distributional
sense. Then $B$ is normal (it is in fact the adjoint of the operator
defined by $xf'(x)$ on the domain $\{f\in L^2(\R):~xf'\in
L^2(\R)\}$). Set $A=B+I$ hence
\[Af(x)=-xf'(x)\]
on
\[D(A)=\{f\in L^2(\R):~xf'\in L^2(\R)\}.\]
We may then easily check that
\[ABf(x)=BAf(x)=x^2f''(x)\]
on their common domain
\[D(B^2)=\{f\in L^2(\R):~xf',x^2f''\in
L^2(\R)\}.\] Hence $AB$ and $BA$ are not closed, hence they are not
normal.

Now, proceeding as in \cite{Mortad-IEOT-2009} (where a similar
operator was dealt with) we may show that via a form of the Mellin
transform that $B$ is unitary equivalent to the multiplication
operator $M$ defined by
\[Mf(x)=(x-\frac{1}{2}i)f(x)\]
on its domain
\[D(M)=\{f\in L^2(\R):~xf\in L^2(\R)\}.\]
But $M$ is known to be non invertible, so neither is $B$ nor is $A$.
\end{rema}

\section{Conclusion}

Lemma \ref{(AB)*=B*A*} has played a very important role in the
proofs of most of the results in the present paper. Of course, we
could have used other similar known results in the literature. See
for instance \cite{Castr-Gold} and \cite{Sch-1970}. Theorem
\ref{Fuglede-NEW-ALL-UNBD} also played an important role in the
proof of Theorem \ref{normal A+B NEW}. We could have also used the
Fuglede-Putnam-Mortad theorem which appeared in
\cite{Mortad-Fuglede-Putnam-CAOT-2011}. We also think that Theorem
\ref{Fuglede-NEW-ALL-UNBD} should have other applications somewhere
else.


\begin{thebibliography}{1}


\bibitem{Castr-Gold} J. v. Casteren and S. Goldberg, {\it The conjugate of the
product of operators}, Studia Mathematica {\bf 38} (1970), 125-130.

\bibitem{Con}
J. B. Conway, \textit{A Course in Functional Analysis}, \textnormal{
Springer, 1990 (2nd edition)}.

\bibitem{vandaele}
A. van Daele, \textit{On Pairs of Closed Operators,} Bull. Soc.
Math. Belg. Ser. B 34 (1982), no. 1, 25-40.

\bibitem{DevNussbaum-von-Neumann}
A. Devinatz, A. E. Nussbaum, J. von Neumann, On the Permutability of
Self-adjoint Operators, \textit{Ann. of Math. (2)}, {\bf 62} (1955),
199-203.


\bibitem{DevNussbaum}
A. Devinatz, A. E. Nussbaum, On the Permutability of Normal
Operators, \textit{Ann. of Math. (2)}, {\bf 65} (1957), 144-152.


\bibitem{FUG-19825}
B. Fuglede, \textit{ Conditions for Two Selfadjoint Operators to
Commute or to Satisfy the Weyl Relation}, Math. Scand.,
\textbf{51/1} (1982),
 163-178.

\bibitem{Gesztesy}
F. Gesztesy, J. A. Goldstein, H. Holden, G. Teschl, \textit{Abstract
Wave Equations and Associated Dirac-Type Operators,} Annali di
Matematica. DOI 10.1007/s10231-011-0200-7.

\bibitem{GGK}
I. Gohberg, S. Goldberg, M. A. Kaashoek, \textit{Basic Classes of
Linear Operators}, Birkh\"{a}user Verlag, Basel, 2003.

\bibitem{Gold-Book-1966} S. Goldberg, {\it Unbounded Linear Operators}, McGraw--Hill,
1966.

\bibitem{Gust-CMB-2011} K. Gustafson, {\it On operator sum and product adjoints and
closures}, Canad. Math. Bull. {\bf 54} (2011), 456-463.

\bibitem{HK}
P. Hess, T. Kato, \textit{Perturbation of Closed Operators and Their
Adjoints}, Comment. Math. Helv., {\bf 45} (1970) 524-529.


\bibitem{Jorgensen-PALLE}
P. E. T. Jorgensen, {\textit{Unbounded Operators: Perturbations and
Commutativity Problems},} J. Funct. Anal. \textbf{39/3} (1980)
281-307.

\bibitem{Kato-Book} T. Kato, {\it Perturbation Theory for Linear Operators}, 2nd
Edition, Springer, 1980.

\bibitem{KOS}
H. Kosaki, {\textit{On Intersections of Domains of Unbounded
Positive Operators},} Kyushu J. Math., {\bf 60/1} (2006) 3-25.


\bibitem{MV}
R. Meise, D. Vogt, \textit{Introduction to Functional Analysis},
Oxford G.T.M. {\bf 2}, {\textit{Oxford University Press}} 1997.

\bibitem{Mess-Mortad-Djell-Azz}
B. Messirdi, M. H. Mortad, A. Azzouz, G. Djellouli, \textit{A
Topological Characterization of the Product of Two Closed
Operators}, Colloq. Math., {\bf 112/2} (2008) 269-278.

\bibitem{Mortad-PAMS2003} M. H. Mortad, {\it An Application of the Putnam-Fuglede
Theorem to Normal Products of Selfadjoint Operators}, Proc. Amer.
Math. Soc. {\bf 131} (2003), 3135-3141.

\bibitem{Mortad-IEOT-2009} M. H. Mortad, {\it On some product of two unbounded
self-adjoint operators}, Integral Equations Operator Theory {\bf 64}
(2009), 399-408.

\bibitem{Mortad-CMB-2011}
M. H. Mortad, \textit{On the Adjoint and the Closure of the Sum of
Two Unbounded Operators}, Canad. Math. Bull., {\bf 54/3} (2011)
498-505. Doi:10.4153/CMB-2011-041-7.

\bibitem{mortad-CAOT-sum-normal}
M. H. Mortad, On the Normality of the Sum of Two Normal Operators,
\textit{Complex Anal. Oper. Theory}, {\bf 6/1} (2012), 105-112. DOI:
10.1007/s11785-010-0072-7.

\bibitem{Mortad-Demm-math} M. H. Mortad, {\it On the closedness, the self-adjointness and
the  normality of the product of two unbounded operators},
Demonstratio Math., \textbf{45/1} (2012), 161-167.

\bibitem{Mortad-Madani} M. H. Mortad, Kh. Madani, {\it More on the Normality of the Unbounded Product of Two Normal Operators}, Rend. Semin. Mat.
Univ. Politec. Torino, (to appear). arXiv:1202.6142v1.

\bibitem{Mortad-Fuglede-Putnam-CAOT-2011}
M. H. Mortad, \textit{An All-Unbounded-Operator Version of the
Fuglede-Putnam Theorem},  Complex Anal. Oper. Theory,  (to appear).
DOI: 10.1007/s11785-011-0133-6.

\bibitem{Mortad-product-unbounded-kaplansky} M. H. Mortad, {\it Products of Unbounded Normal Operators}, (submitted). arXiv:1202.6143v1.

\bibitem{Nelson-Anal-vectors}
E. Nelson, \textit{Analytic vectors}, Ann. of Math. (2), \textbf{70}
(1959) 572-615.

\bibitem{Putnam-book}C. Putnam, {\it Commutation Properties of Hilbert Space
Operators}, Springer, 1967.

\bibitem{RS1}
M. Reed, B. Simon, Methods of Modern Mathematical Physics,
Vol.\textbf{1}, Functional Analysis, \textit{Academic Press}, 1972.

\bibitem{RUD}
W.~Rudin, \textit{Functional Analysis}, McGraw-Hill, 1991 (2nd
edition).

\bibitem{Sch-1970} M. Schechter, {\it The Conjugate of a Product of Operators},
J. Functional Analysis {\bf 6} (1970), 26-28.

\bibitem{Schmudgen-Friedrich-II}
K. Schm\"{u}dgen, J. Friedrich, \textit{On commuting unbounded
selfadjoint operators II}, Integral Equations Operator Theory,
\textbf{7/6} (1984), 815-867.

\bibitem{Seb-Stochel}
Z. Sebestyén, J. Stochel, \textit{On Products of Unbounded
Operators}, Acta Math. Hungar. \textbf{100/1-2} (2003) 105-129.

\bibitem{Vasilescu-1983-anti-commuting}
F. H. Vasilescu, \textit{Anticommuting Selfadjoint Operators}, Rev.
Roumaine Math. Pures Appl. \textbf{28/1} (1983), 76-91.

\bibitem{WEI}
J.~Weidmann, Linear operators in Hilbert spaces (translated from the
German by J. Sz\"{u}cs), Srpinger-Verlag, GTM {\bf 68} (1980).
\end{thebibliography}
\end{document}